\newtheorem{theorem}{Theorem}[section]
\newtheorem{lemma}[theorem]{Lemma}
\newtheorem{example}[theorem]{Example}
\newtheorem{proposition}[theorem]{Proposition}
\newtheorem{remark}[theorem]{Remark}
\title{On Ehresmann semigroups}
\author{Mark V. Lawson}
\address{Mark V. Lawson, Department of Mathematics
and the
Maxwell Institute for Mathematical Sciences,
Heriot-Watt University,
Riccarton,
Edinburgh EH14 4AS,
UNITED KINGDOM}
\email{m.v.lawson@hw.ac.uk}
\dedicatory{This paper is dedicated to the memory of Peter M. Neumann}
\thanks{I would like to thank Prof. V. A. R. Gould of the University of York for her comments on an earlier draft of this paper. I would also like to thank the anonymous referee for their careful reading of the submitted paper and helpful suggestions.}
\begin{document}

\begin{abstract}
We formulate an alternative approach to describing Ehresmann semigroups
by means of left and right \'etale actions of a meet semilattice on a category. 
We also characterize the Ehresmann semigroups that arise as the set of all subsets of a finite category.
As applications, we prove that every restriction semigroup can be nicely embedded
into a restriction semigroup constructed from a category,
and we describe when a restriction semigroup can be nicely embedded into an inverse semigroup. 
\end{abstract}
\maketitle

\section{Introduction}

Ehresmann semigroups were introduced in \cite{Lawson1991} as generalizations of inverse semigroups.
We recall their definition here.
An {\em Ehresmann semigroup}
is a semigroup $S$ with a distinguished subset $U \subseteq \mathsf{E}(S)$ of the set of all idempotents,
called the set of {\em projections}, equipped with two functions $\ast, + \colon S \rightarrow U$ satisfying the following four axioms:
\begin{description}
\item[{\rm (ES1)}] $U$ is a commutative subsemigroup; we may therefore, equivalently, view $U$ as a meet semilattice.
\item[{\rm (ES2)}] The maps $\ast$ and $+$ are the identity on $U$. 
\item[{\rm (ES3)}] $aa^{\ast} = a$ and $a^{+} a = a$ for all $a \in S$.
\item[{\rm (ES4)}] $(a^{\ast}b)^{\ast} = (ab)^{\ast}$ and $(ab^{+})^{+} = (ab)^{+}$ for all $a,b \in S$.
\end{description}
Observe that $a^{\ast}$ is the smallest projection $e$ such that $ae = a$;
a dual result holds for $a^{+}$.
Ehresmann semigroups will be denoted by $(S,U)$ to make the set of projections clear or by $S$ alone if the set of projections
is already evident.
A {\em morphism} of Ehresmann semigroups $(S,U)$ and $(T,V)$ is a semigroup homomorphism $\theta \colon S \rightarrow T$
such that $\theta (a)^{\ast} = \theta (a^{\ast})$ and $\theta (a)^{+} = \theta (a^{+})$ 
for all $a \in S$.
Observe that if $e \in U$ then $\theta (e)^{\ast} = \theta (e^{\ast}) = \theta (e)$.
It follows that $\theta$ maps $U$ to $V$.
We say that such a morphism is an {\em isomorphism} if $\theta$ is also an isomorphism of semigroups.
Observe that such an isomorphism induces an isomorphism between $U$ and $V$.
Ehresmann semigroups can be equivalently described as algebras of type $(2,1,1)$;
see \cite{BGG}.
 The fact that Ehresmann semigroups thereby form a variety means that free objects exist.
 We refer the reader to \cite{Kambites} and \cite{FGG} for more information;
 we also recommend the references to be found in both these papers.

\begin{remark}\label{rem:origins}{\em 
Ehresmann semigroups arose within the York School of semigroup theory led by J. B. Fountain \cite{GL}.
The paper \cite{Fountain} was particularly influential;
in particular, the conditions below which we refer to below as `deterministic' and `codeterministic'
were first introduced in this paper.
The theme of this School became that of determining which properties of (von Neumann)
regular semigroups could be generalized to a non-regular setting. 
There was a particular emphasis on the non-regular
generalizations of inverse semigroups with the focus of attention being 
on the abstract relationship between an element $a$ and its idempotents $a^{-1}a$ and $aa^{-1}$.
Subsequently, these ideas were then developed not only for semigroups but also for categories \cite{CGH2012}.
My own paper \cite{Lawson1991} was written within the York School framework with the goal being 
to describe the most general class of non-regular semigroups that could be regarded as natural generalizations of inverse semigroups.
This was achieved by developing ideas due to Charles Ehresmann and his students.
The key stumbling block became that of order:
in an inverse semigroup, the natural partial order encodes algebraically the order induced by subset-inclusion.
For more general classes of Ehresmann semigroups, this neat relationship between algebra and order does not hold.
Trying to deal with this problem led to the notions of `bideterministic element' and `partial isometry'
which play major r\^oles in this paper.}
\end{remark}

The following are standard results about Ehresmann semigroups
but they are also easy to prove directly.

\begin{lemma}\label{lem:basic-results} Let $S$ be an Ehresmann semigroup with set of projections $U$.
\begin{enumerate}
\item $(ab)^{\ast} \leq b^{\ast}$.
\item $(ab)^{+} \leq a^{+}$.
\item If $U$ has a zero then $a = 0$ if and only if $a^{\ast} = 0$.
\item If $U$ has a zero then $a = 0$ if and only if $a^{+} = 0$.
\end{enumerate}
\end{lemma}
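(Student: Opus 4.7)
The plan is to deduce parts (1) and (2) from the characterization, mentioned in the paper immediately after the axioms, that $a^{\ast}$ is the smallest projection $e \in U$ satisfying $ae = a$ (dually for $a^{+}$). Parts (3) and (4) will then be one-line consequences of (ES2) and (ES3), once we are clear about what is meant by a zero.

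First I would verify the characterization directly from the axioms. Recall that the meet-semilattice order on $U$ is $e \leq f$ iff $ef = e$. By (ES3), $a \cdot a^{\ast} = a$, so $a^{\ast}$ has the required property. Suppose now $e \in U$ with $ae = a$. Applying $\ast$ and using (ES4) gives $a^{\ast} = (ae)^{\ast} = (a^{\ast}e)^{\ast}$; since $a^{\ast}e \in U$, (ES2) yields $(a^{\ast}e)^{\ast} = a^{\ast}e$, so $a^{\ast}e = a^{\ast}$, i.e.\ $a^{\ast} \leq e$. Part (1) is now immediate: by associativity and (ES3), $(ab) \cdot b^{\ast} = a(b b^{\ast}) = ab$, so $b^{\ast}$ is one of the projections $e$ with $(ab)e = ab$, and minimality forces $(ab)^{\ast} \leq b^{\ast}$. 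Part (2) follows by the dual argument applied to $a^{+}a = a$.

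For (3), I interpret ``$U$ has a zero'' in the standard way, namely that there is an element $0 \in U$ which is a two-sided zero of the ambient semigroup $S$. If $a = 0$ then $a \in U$, so (ES2) gives $a^{\ast} = a = 0$. Conversely, if $a^{\ast} = 0$, then (ES3) together with the fact that $0$ annihilates $S$ give $a = a \cdot a^{\ast} = a \cdot 0 = 0$. Part (4) is entirely dual. I do not anticipate any genuine obstacle; the only point worth flagging is the convention on ``zero'' in (3) and (4), since the implication $a^{\ast} = 0 \Rightarrow a = 0$ cannot be derived from the four axioms alone if $0$ is merely assumed to be the bottom of the semilattice $U$ rather than a zero of $S$.
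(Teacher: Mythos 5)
Your argument is correct and is essentially the direct verification the paper leaves to the reader: parts (1) and (2) follow from the paper's own observation, stated immediately after the axioms, that $a^{\ast}$ (resp.\ $a^{+}$) is the least projection $e$ with $ae=a$ (resp.\ $ea=a$), which you derive correctly from (ES1), (ES2) and (ES4), and (3), (4) are then immediate. Your caveat about the meaning of ``zero'' is also well taken: the intended reading is that $0$ is a zero of $S$ lying in $U$ (as in $\mathsf{P}(C)$, where $0=\varnothing$), and with the weaker reading the implication $a^{\ast}=0\Rightarrow a=0$ genuinely fails --- for instance the three-element chain semilattice $a<e<1$, with $U=\{e,1\}$ and $a^{\ast}=a^{+}=e$, satisfies all four axioms, yet $a^{\ast}$ equals the bottom of $U$ while $a\neq e$.
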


Ehresmann semigroups come equipped with two partial orders which are algebraically defined:
\begin{itemize}
\item $x \leq_{r} y$ if and only if $x = ey$ for some $e \in U$. Observe that $x^{+} \leq y^{+}$.
\item $x \leq_{l} y$ if and only if $x = yf$ for some $f \in U$. Observe that $x^{\ast} \leq y^{\ast}$.
\end{itemize}
Define 
$$\leq \,= \,\leq_{l} \cap \leq_{r}.$$
This order will play a special r\^ole in this paper. 
It is easy to check that  the following hold:
\begin{itemize}
\item $x \leq_{r} y$ if and only if $x = x^{+}y$.
\item $x \leq_{l} y$ if and only if $x = yx^{\ast}$.
\item $x \leq y$ if and only if $x = x^{+}y = yx^{\ast}$.
\end{itemize}
Although these orders generalize the natural partial order on an inverse semigroup (where they are all the same)
they do not share such nice properties.
This is an issue we shall have to confront in Section~3.
However, on the set of projections $U$ they agree and we denote their common order by $\leq$.

Ehresmann semigroups have emerged as an interesting class \cite{BGG, BGGW, EG2020, Stein2016}.
In particular, 
they are closely allied to categories in two ways.
First of all, underlying every Ehresmann semigroup is a category.
The following was proved as \cite[Theorem 3.17]{Lawson1991}.

\begin{proposition}\label{prop:cats} Let $S$ be an Ehresmann semigroup with set of projections $U$.
On the set $S$ define the {\em restricted product} $a \cdot b = ab$ when $a^{\ast} = b^{+}$ and undefined
otherwise. Then $(S,\cdot)$ is a category in which $a^{\ast} = \mathbf{d}(a)$ 
and $b^{+} = \mathbf{r}(b)$.
In addition, for any $x,y \in S$ we have that $xy = (xe) \cdot (ey)$ where $e = x^{\ast}y^{+}$.
\end{proposition}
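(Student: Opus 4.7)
The plan is to verify the three data of a (small) category: the object set, the identity arrows, and an associative composition compatible with those identities. I would take the objects to be the elements of $U$; for each $a \in S$ declare $\mathbf{d}(a) = a^{\ast}$ and $\mathbf{r}(a) = a^{+}$, both of which lie in $U$ by the typing of $\ast$ and $+$. The identity arrow at $e \in U$ should be $e$ itself, which makes sense because axiom (ES2) gives $e^{\ast} = e = e^{+}$.

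First I would check that identities behave correctly. If $a \in S$ and $e = a^{+}$, then $e \cdot a$ is defined (since $e^{\ast} = e = a^{+}$) and equals $ea = a^{+}a = a$ by (ES3); similarly $a \cdot a^{\ast} = aa^{\ast} = a$. Next I would verify associativity. Suppose $(a \cdot b) \cdot c$ is defined, so $a^{\ast} = b^{+}$ and $(ab)^{\ast} = c^{+}$. Using (ES4) then (ES3),
\[
(ab)^{\ast} = (a^{\ast}b)^{\ast} = (b^{+}b)^{\ast} = b^{\ast},
\]
so $b^{\ast} = c^{+}$ and $b \cdot c$ is defined. Dually,
\[
(bc)^{+} = (bc^{+})^{+} = (bb^{\ast})^{+} = b^{+} = a^{\ast},
\]
so $a \cdot (b \cdot c)$ is defined, and equality with $(a \cdot b) \cdot c$ then comes for free from associativity of $S$. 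The reverse implication is symmetric. These verifications establish that $(S, \cdot)$ is a category with $\mathbf{d}(a) = a^{\ast}$ and $\mathbf{r}(a) = a^{+}$.

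For the final assertion, set $e = x^{\ast}y^{+}$; this lies in $U$ since $U$ is a subsemilattice. I would compute the two relevant projections. By (ES4), $(xe)^{\ast} = (x^{\ast}e)^{\ast}$, and $x^{\ast}e = x^{\ast}x^{\ast}y^{+} = x^{\ast}y^{+} = e$, which already lies in $U$, so $(xe)^{\ast} = e$. Dually $(ey)^{+} = (ey^{+})^{+} = e$. Hence $(xe) \cdot (ey)$ is defined, and unwinding, $(xe) \cdot (ey) = xeey = xey = x(x^{\ast}y^{+})y = (xx^{\ast})(y^{+}y) = xy$ using idempotence of $e$ together with (ES3).

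The step I expect to be the main obstacle is the associativity check, since it requires applying (ES4) in the right direction to simplify $(ab)^{\ast}$ and $(bc)^{+}$; once one realises that the matching condition on one side of a triple product forces the matching condition on the other, the rest is bookkeeping. The existence of the decomposition $xy = (xe) \cdot (ey)$ is then essentially immediate, but it is worth highlighting because it shows that the semigroup multiplication is determined by the categorical one together with the semilattice action of $U$ by restriction.
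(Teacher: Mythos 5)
Your proof is correct and complete: the identity checks, the key computations $(ab)^{\ast}=b^{\ast}$ and $(bc)^{+}=b^{+}$ under the matching conditions (which also give that $\mathbf{d}$ and $\mathbf{r}$ behave correctly on restricted products), the associativity transfer from $S$, and the verification that $(xe)^{\ast}=e=(ey)^{+}$ are exactly what is needed. The paper itself gives no proof here, citing Theorem~3.17 of \cite{Lawson1991}, and your argument is the standard direct verification of that result, so there is nothing to add.
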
 

Second of all, examples of Ehresmann semigroups directly arise from categories;
the following is a special case of \cite{KL2017}

\begin{example}\label{ex:cats}
{\em Let $C$ be a small category with set of identities $C_{o}$.
We denote the domain and codomain maps on $C$ by $\mathbf{d}$ and $\mathbf{r}$, respectively.
The product $xy$ is defined in the category if and only if $\mathbf{d}(x) = \mathbf{r}(y)$;
in this case, we shall also write $\exists xy$. 
Let $S = \mathsf{P}(C)$ be the set of all subsets of $C$ equipped with the multiplication of subsets.
This is a semigroup.
Put $U = \mathsf{P}(C_{o})$, the set of all subsets of the set of identities.
This set consists of idempotents and is closed under multiplication.
For $A \subseteq C$ define $A^{\ast} = \{\mathbf{d}(a) \colon a \in C \}$
and 
$A^{+} = \{\mathbf{r}(a) \colon a \in C \}$.
We now show that with these definitions $\mathsf{P}(C)$ is an Ehresmann monoid.
Let $E,F \subseteq C_{o}$.
Then $EF = E \cap F$.
It follows that $U$ is a commutative subsemigroup of the set of all idempotents of $S$
so that (ES1) holds.
It is immediate from the definitions that (ES2) holds.
It is evident that (ES3) holds.
It remains to show that (ES4) holds.
By symmetry, it is enough to show that $(A^{\ast}B)^{\ast} = (AB)^{\ast}$ holds.
In a category $ab$ is defined if and only if $\mathbf{d}(a) = \mathbf{r}(b)$.
Thus $ab$ is defined if and only if $\mathbf{d}(a)b$ is defined in which case 
$\mathbf{d}(ab) = \mathbf{d}(\mathbf{d}(a)b)$. 
We have therefore proved that we are dealing with an Ehresmann monoid.
A special case of this construction shows that the monoid of all binary relations on a set $X$,
which we denote by $\mathscr{B}(X)$, is also an Ehresmann monoid;
the category in this special case is the set $X \times X$ where $\mathbf{d}(x,y) = (y,y)$ and $\mathbf{r}(x,y) = (x,x)$
and product $(x,y)(y,z) = (x,z)$.}
\end{example}

In \cite{Lawson1991}, we showed what additional structure a category needed to be equipped with
in order that it arise from an Ehresmann semigroup.
In Section~2 of this paper, we shall prove Theorem~\ref{thm:ehresmann-biactions}, which describes a different (though, obviously, equivalent) way, 
in which this can be accomplished.
It arose from reading \cite[Page 184]{Resende2007}.
In Section~3 of this paper, we shall characterize the Ehresmann monoids of the form $\mathsf{P}(C)$, 
where $C$ is a finite category, in Theorem~\ref{them:TWO}.
This result uses ideas first described in \cite{KL2017}.
In Section~4, we prove two theorems, as Theorem~\ref{them:THREE} and Theorem~\ref{them:FOUR}, about special kinds of Ehresmann semigroups called restriction semigroups.

\section{Ehresmann biactions}

The goal of this section is to formulate a different categorical approach to characterizing Ehresmann semigroups
from the one described in \cite{Lawson1991}.
In fact, our approach can now be seen as a special case of \cite{FK}
although it in fact arose from reading \cite[Page 184]{Resende2007}.
Our goal is to prove a kind of converse of Proposition~\ref{prop:cats}.
To do this, we shall need a class of actions called \'etale actions 
\cite{FS2010, Steinberg2011} though we prefer the term `supported actions'.
Let $S$ be an inverse semigroup and $X$ a set.
Let $p \colon X \rightarrow \mathsf{E}(S)$ be a function.
A {\em supported action} is a left action $S \times X \rightarrow X$, denoted by $(s,x) \mapsto s \cdot x$, such that $p(x)\cdot x = x$ and $p(s \cdot x) = sp(x)s^{-1}$.

\begin{remark}\label{rem:extending}
{\em A feature of \'etale actions is that they can be restricted
and then reconstructed from this restriction.
Observe that $s \cdot x = s \cdot (p(x) \cdot x) = (sp(x)) \cdot x$
and that $(sp(x))^{-1}sp(x) = p(x)s^{-1}s \leq p(x)$.
Thus we may restrict the action to those pairs $(s,x)$ where $s^{-1}s \leq p(x)$
and lose nothing. 
We shall not use this result below but it explains why this new approach using actions 
is equivalent to the one adopted in \cite{Lawson1991}
which uses restrictions and corestrictions.}
\end{remark}

We shall only be interested in supported actions where the acting inverse semigroup is a meet semilattice, in which case the above properties simplify somewhat.
We can now  define an {\em Ehresmann biaction} starting from a category $C$ by the following six axioms:
\begin{description}
\item[{\rm (E1)}] The set of identities $C_{o}$ of $C$ is equipped with the structure of a commutative, idempotent semigroup.
\item[{\rm (E2)}] There are two supported actions:
there is a left action $C_{o} \times C \rightarrow C$ denoted by $(e,a) \mapsto e \cdot a$
such that $\mathbf{r}(a) \cdot a = a$ and $\mathbf{r}(e \cdot a) = e\mathbf{r}(a)$;
there is a right action $C \times C_{o} \rightarrow C$ denoted by $(a, e) \mapsto a \cdot e$
such that $a \cdot \mathbf{d}(a) = a$ and $\mathbf{d}(a \cdot e) = \mathbf{d}(a)e$.
Observe that we do {\em not} assume, for example, that $e \cdot (ab) = (e \cdot a)(e \cdot b)$. See Axiom (E6) below.
\item[{\rm (E3)}]  The biaction property $(e \cdot a) \cdot f = e \cdot (a \cdot f)$ holds.
\item[{\rm (E4)}] We require that $e \cdot a = ea$ and $a \cdot e = ae$ if $a \in C_{o}$.
\item[{\rm (E5)}]  $\mathbf{d}(e \cdot a) \leq \mathbf{d}(a)$ and $\mathbf{r}(a \cdot e) \leq \mathbf{r}(a)$.
\item[{\rm (E6)}]  When $\exists xy$ then
$$e \cdot (xy) = (e \cdot x)(\mathbf{d}(e \cdot x) \cdot y)
\text{ and }
(xy) \cdot e = (x \cdot \mathbf{r}(y \cdot e))(y \cdot e).$$
\end{description}

\begin{remark}\label{rem:biactions}{\em The essential difference between the approach to Ehresmann semigroups
described here and the one described in \cite{Lawson1991} is that
the latter requires orders whereas here we use only actions.
Remark~\ref{rem:extending} provides some insight into the relationship between the two approaches.}\end{remark}

We now prove that every Ehresmann semigroup gives rise to an Ehresmann biaction.

\begin{proposition}\label{prop:ehresmann-biaction} Let $(S,U)$ be an Ehresmann semigroup.
Put $C = S$ equipped with the restricted product.
Then $C_{o} = U$, a commutative idempotent semigroup,
and $C$ is an Ehresmann biaction.
\end{proposition}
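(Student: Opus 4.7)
The plan is to put the biaction structure directly on top of the semigroup multiplication: define $e\cdot a:=ea$ and $a\cdot e:=ae$ for $e\in U$ and $a\in S$. First I would identify the identities of the restricted-product category. An element $e\in S$ satisfies $\mathbf{d}(e)=\mathbf{r}(e)=e$ iff $e^{\ast}=e=e^{+}$, and by (ES2) this holds exactly when $e\in U$; hence $C_{o}=U$, which together with (ES1) gives (E1).

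The main technical ingredient is the identity
\[
(ea)^{+}=ea^{+}\qquad\text{and dually}\qquad (ae)^{\ast}=a^{\ast}e
\]
for $e\in U$ and $a\in S$. I would prove the first by applying (ES4) in the form $(ab^{+})^{+}=(ab)^{+}$ with $(a,b)=(e,a)$ to get $(ea)^{+}=(ea^{+})^{+}$; since $ea^{+}\in U$ (a product of two projections, using (ES1)), (ES2) gives $(ea^{+})^{+}=ea^{+}$. With this identity in hand, the left action is supported: associativity is inherited from $S$, the equation $\mathbf{r}(a)\cdot a=a^{+}a=a$ is (ES3), and $\mathbf{r}(e\cdot a)=e\,\mathbf{r}(a)$ is exactly the identity just established. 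The right action is handled dually, proving (E2).

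Axioms (E3) and (E4) follow immediately from associativity of $S$ and from (ES2). For (E5), Lemma~\ref{lem:basic-results} gives $\mathbf{d}(e\cdot a)=(ea)^{\ast}\le a^{\ast}=\mathbf{d}(a)$, and dually. The only substantive axiom is (E6). Assuming $\exists xy$, i.e.\ $x^{\ast}=y^{+}$, I would first show that $(e\cdot x)(\mathbf{d}(e\cdot x)\cdot y)$ is defined in the category: by the key identity $((ex)^{\ast}y)^{+}=(ex)^{\ast}y^{+}$, and since $(ex)^{\ast}\le x^{\ast}=y^{+}$ by Lemma~\ref{lem:basic-results}, this equals $(ex)^{\ast}=\mathbf{d}(ex)$. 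The equality itself then reduces via (ES3) and associativity of $S$ to $(ex)((ex)^{\ast}y)=(ex)y=e(xy)$. The second half of (E6) is entirely symmetric.

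There is no deep obstacle here; the whole argument is essentially a translation of the Ehresmann axioms into statements about the restricted-product category. The one place where care is required is the verification that the products appearing on the right-hand sides of (E6) genuinely lie in the category, and this is precisely where both the key identity and the inequalities of Lemma~\ref{lem:basic-results} are used.
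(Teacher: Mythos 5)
Your proposal is correct and follows essentially the same route as the paper: the paper also defines both actions by the semigroup multiplication and simply asserts that all six axioms follow routinely from the Ehresmann axioms. You have merely supplied the routine verifications the paper omits (the key identities $(ea)^{+}=ea^{+}$ and $(ae)^{\ast}=a^{\ast}e$, and the check that the products in (E6) are genuinely restricted products), and these are all accurate.
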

\begin{proof} The left action $C_{o} \times C \rightarrow C$ and the right action $C \times C_{o} \rightarrow C$ are
both defined by multiplication.
All the axioms are easy to check from the axioms and properties of an Ehresmann semigroup.
\end{proof}

Proposition~\ref{prop:ehresmann-biaction} tells us that Ehresmann semigroups give rise to Ehresmann biactions. 
We now prove the first theorem of this paper which shows us that we can construct Ehresmann semigroups from Ehresmann biactions.

\begin{theorem}\label{thm:ehresmann-biactions} Let $C$ be an Ehresmann biaction.
Given $x,y \in C$,
put $e = \mathbf{d}(x)\mathbf{r}(y)$
and define $x \bullet y = (x \cdot e)(e \cdot y)$, called the {\em pseudoproduct}.
Then $(C,\bullet)$ is an Ehresmann semigroup with set of projections $C_{o}$.
\end{theorem}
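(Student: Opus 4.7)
My approach is to check that $\bullet$ is well-defined, verify the four axioms (ES1)-(ES4) with $x^{*} := \mathbf{d}(x)$ and $x^{+} := \mathbf{r}(x)$, and finally establish associativity of $\bullet$, which is the main obstacle. Well-definedness is immediate from (E2): since $e = \mathbf{d}(x)\mathbf{r}(y) \leq \mathbf{d}(x)$ in the meet semilattice $C_{o}$, one has $\mathbf{d}(x \cdot e) = \mathbf{d}(x)e = e$, and dually $\mathbf{r}(e \cdot y) = e$, so the category product $(x \cdot e)(e \cdot y)$ is defined.

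(ES1) is just (E1), and (ES2) holds because every $e \in C_{o}$ is a category identity, so $\mathbf{d}(e) = \mathbf{r}(e) = e$. For (ES3), computing $x \bullet \mathbf{d}(x)$ involves the semilattice element $\mathbf{d}(x)\mathbf{r}(\mathbf{d}(x)) = \mathbf{d}(x)$, after which (E2) and (E4) collapse both factors, reducing the pseudoproduct to the category product $x \cdot \mathbf{d}(x) = x$; the argument for $\mathbf{r}(x) \bullet x$ is dual. For (ES4), notice that $x^{*} \bullet y$ and $x \bullet y$ employ the \emph{same} semilattice element $e = \mathbf{d}(x)\mathbf{r}(y)$, because $\mathbf{d}(\mathbf{d}(x)) = \mathbf{d}(x)$; (E4) collapses the first factor of $x^{*} \bullet y$, giving $x^{*} \bullet y = e \cdot y$, and hence $(x^{*} \bullet y)^{*} = \mathbf{d}(e \cdot y) = \mathbf{d}((x \cdot e)(e \cdot y)) = (x \bullet y)^{*}$; the dual computation yields the other half of (ES4).

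The substantive step is associativity, $(x \bullet y) \bullet z = x \bullet (y \bullet z)$. My plan is to expand both iterated pseudoproducts and apply (E6) to push the outer action through the inner category product: the second formula of (E6) rewrites $((x \cdot e)(e \cdot y)) \cdot u$ on the left-associated side, while the first formula rewrites $w \cdot ((y \cdot f)(f \cdot z))$ on the right-associated side, where $e, f, u, w$ are the appropriate meets in $C_{o}$. Combined with (E3) and the compatibility of the semilattice action, each iterated pseudoproduct unfolds into a three-factor category composition of the form $(x \cdot p)(p \cdot y \cdot q)(q \cdot z)$ for suitable $p, q \in C_{o}$. The delicate step is then showing that the two three-factor products coming from the two bracketings coincide; this is where the correction terms $\mathbf{d}(e \cdot x)$ and $\mathbf{r}(y \cdot e)$ built into (E6) are essential, since they are tuned so that the middle factor, reinterpreted via (E2), (E3), and (E5), absorbs the differences in $p$ and $q$ and produces the same category element from either direction.
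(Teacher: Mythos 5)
Your treatment of well-definedness and of (ES1)--(ES4) is correct and is essentially the verification the paper gives: in particular your observation that $x^{*}\bullet y$ and $x\bullet y$ use the same meet $e=\mathbf{d}(x)\mathbf{r}(y)$, so that both have $\ast$ equal to $\mathbf{d}(e\cdot y)$, is exactly the argument used there. The strategy you outline for associativity is also the paper's strategy (expand both bracketings with (E6) and (E3) and compare). The genuine gap is that you never carry this out: the whole substance of the theorem is the verification that the two expansions coincide, and at precisely that point you only assert that the correction terms $\mathbf{d}(e\cdot x)$ and $\mathbf{r}(y\cdot e)$ in (E6) are ``tuned'' so that the middle factor ``absorbs the differences.'' That is a restatement of what must be proved, not a proof; nothing in your text shows how the differing projections produced by the two bracketings are reconciled.

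To close the gap you must exhibit a common normal form and derive it from each side. Writing $f=\mathbf{d}(x)\mathbf{r}(y)$ and $e=\mathbf{d}(x\bullet y)\mathbf{r}(z)$, axioms (E6) and (E3) give
$(x\bullet y)\bullet z=[(x\cdot f)\cdot\mathbf{r}(f\cdot y\cdot e)]\,(f\cdot y\cdot e)\,(e\cdot z)$,
and one then needs a chain of identities such as $f\cdot y=\mathbf{d}(x)\cdot y$, $f\cdot y\cdot e=\mathbf{d}(x)\cdot y\cdot\mathbf{r}(z)$, $e\cdot z=\mathbf{d}(\mathbf{d}(x)\cdot y)\cdot z$ and $(x\cdot f)\cdot\mathbf{r}(f\cdot y\cdot e)=x\cdot\mathbf{r}(y\cdot\mathbf{r}(z))$, each proved from (E2), (E3) and the support conditions $\mathbf{d}(a\cdot e)=\mathbf{d}(a)e$, $\mathbf{r}(e\cdot a)=e\,\mathbf{r}(a)$; the analogous computation for $x\bullet(y\bullet z)$ then shows that both bracketings equal
$(x\cdot\mathbf{r}(y\cdot\mathbf{r}(z)))\,(\mathbf{d}(x)\cdot y\cdot\mathbf{r}(z))\,(\mathbf{d}(\mathbf{d}(x)\cdot y)\cdot z)$.
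Note also that your claimed target shape $(x\cdot p)(p\cdot y\cdot q)(q\cdot z)$ with a single $p$ and a single $q$ is not what actually comes out: in the common form the outer factors involve $\mathbf{r}(y\cdot\mathbf{r}(z))$ and $\mathbf{d}(\mathbf{d}(x)\cdot y)$, which are in general strictly below $\mathbf{d}(x)$ and $\mathbf{r}(z)$ by (E5), so four distinct projections occur, and making them match from both directions is exactly where the work of the proof lies.
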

\begin{proof}  The fact that the pseudoproduct is a binary operation follows from axioms (E1) and (E2).
We prove that the pseudoproduct is associative.
First, observe that 
$$(x \bullet y) \bullet z
=
[(x \cdot f) \cdot \mathbf{r}(f \cdot y \cdot e)]
(f \cdot y \cdot e)
(e \cdot z)$$
where $e = \mathbf{d}(x \bullet y) \mathbf{r}(z)$ and $f = \mathbf{d}(x)\mathbf{r}(y)$
and using axioms (E6) and (E3).
It is easy to check from the axioms that the following hold: 
\begin{itemize}
\item $e = \mathbf{d}(f \cdot y) \mathbf{r}(z)$.
\item $f \cdot y = \mathbf{d}(x) \cdot y$ by axiom (E2); where we use the properties of the action and the fact that $\mathbf{r}(y) \cdot y = y$.
\item $f \cdot y \cdot e = \mathbf{d}(x) \cdot y \cdot \mathbf{r}(z)$.
This follows from the fact that
$(f \cdot y) \cdot e = (f \cdot y) \cdot \mathbf{d}(f \cdot y)\mathbf{r}(z)$
which is equal to $(f \cdot y) \cdot \mathbf{r}(z)$ using axiom (E2)
and the result now follows by one of the results above.
\item $e \cdot z = \mathbf{d}(\mathbf{d}(x) \cdot y) \cdot z$.
This uses axiom (E2) and some of the results above.
\item $[(x \cdot f) \cdot \mathbf{r}(f \cdot y \cdot e)] 
= x \cdot \mathbf{r}(y \cdot \mathbf{r}(z))$.
Observe that 
$(x \cdot f) \cdot \mathbf{r}(f \cdot y \cdot e)
 = x \cdot \mathbf{r}(f \cdot y \cdot e)$
using axiom (E2).
By one of the results above and using axiom (E2) we get the result.
\end{itemize}
We have therefore shown that
$$(x \bullet y) \bullet z
=
(x \cdot \mathbf{r}(y \cdot \mathbf{r}(z)))(\mathbf{d}(x) \cdot y \cdot \mathbf{r}(z))(\mathbf{d}(\mathbf{d}(x) \cdot y) \cdot z).$$
Second, observe that 
$$x \bullet (y \bullet z)
=
(x \cdot i)(i \cdot y \cdot j)(\mathbf{d}(i \cdot y \cdot j) \cdot j \cdot z)$$
where $i = \mathbf{d}(x)\mathbf{r}(r \bullet z)$ and $j = \mathbf{d}(y)\mathbf{r}(z)$
and using axioms (E6) and (E3).
It is easy to check from the axioms that the following hold with similar proofs to the ones above:
\begin{itemize}
\item $x \cdot i = x \cdot \mathbf{r}(x \cdot \mathbf{r}(z))$.
\item $i \cdot y \cdot j = \mathbf{d}(x) \cdot y \cdot \mathbf{r}(z)$.
\item $\mathbf{d}(i \cdot y \cdot j) \cdot j \cdot z = \mathbf{d}(\mathbf{d}(x) \cdot y) \cdot z$.
\end{itemize}
We have therefore shown that
$$x \bullet (y \bullet z)
=
(x \cdot \mathbf{r}(y \cdot \mathbf{r}(z)))(\mathbf{d}(x) \cdot y \cdot \mathbf{r}(z))(\mathbf{d}(\mathbf{d}(x) \cdot y) \cdot z).$$
It now follows that $\bullet$ is associative.

Observe that if $e \in C_{o}$ then $a \bullet e = a \cdot e$
and that 
if $e,f \in C_{o}$ then $ef = e \bullet f$.
It follows that $\bullet$ is commutative and idempotent on $C_{o}$.
Define 
$$a^{\ast} = \mathbf{d}(a) \text{ and } a^{+} = \mathbf{r}(a).$$
Then 
$$a \bullet a^{\ast} = a \bullet \mathbf{d}(a) = a \cdot \mathbf{d}(a) = a$$
by (E2).
We calculate 
$$(a^{\ast} \bullet b)^{\ast} = \mathbf{d}(\mathbf{d}(a) \bullet b) = \mathbf{d}(e \cdot b)$$
where $e = \mathbf{d}(a)\mathbf{r}(b)$.
Thus $(a^{\ast} \bullet b)^{\ast} = \mathbf{d}(\mathbf{d}(a) \cdot b)$.
On the other hand 
$$(a \bullet b)^{\ast} = \mathbf{d}(a \bullet b) = \mathbf{d}((a \cdot e)(e \cdot b))$$
where $e = \mathbf{d}(a)\mathbf{r}(b)$.
Thus $(a \bullet b)^{\ast} = \mathbf{d}(e \cdot b) = \mathbf{d}(\mathbf{d}(a) \cdot b)$.
The dual results are proved by symmetry.
Thus $(C,\bullet)$ is an Ehresmann semigroup with set of projections $C_{o}$, as claimed.
\end{proof}

\begin{remark}{\em We could continue and show that with suitably defined maps between
 Ehresmann biactions our construction is functorial.
 However, this is straightforward and will not be carried out here.}
 \end{remark}

\section{A class of finite Ehresmann monoids}

In this section, we shall characterize the finite Ehresmann monoids which are isomorphic to those arising from
finite categories as in Example~\ref{ex:cats}.
We have found \cite[Chapter 1]{Givant} a useful reference in helping us to clarify our thoughts.
We have also used ideas from \cite{KL2017} in an essential way.

We shall need to define two classes of elements within an Ehresmann semigroup.
The first requires only the algebraic structure of an Ehresmann semigroup.
We use the terminology from \cite{CGH2012}. 
An element $a \in S$ of an Ehresmann semigroup is said to be {\em deterministic}
if $ea = a(ea)^{\ast}$ for all $e \in U$;
it is said to be {\em codeterministic} if
$ae = (ae)^{+}a$ for all $e \in U$.
An element is said to be {\em bideterministic} if it is both deterministic and codeterministic.

An Ehresmann semigroup is said to be a {\em birestriction semigroup} if every element is bideterministic; in what follows, we shall usually just say `restriction semigroup' rather than `birestriction semigroup'.
We shall say more about restriction semigroups in Section~4.

We now characterize the bideterministic elements of Ehresmann monoids of the form 
$\mathsf{P}(C)$ where $C$ is a category.

\begin{lemma}\label{lem:det-codet} Let $C$ be a category with $\mathsf{P}(C)$ being its associated Ehresmann monoid.
\begin{enumerate}
\item A non-empty subset $A \subseteq C$ is deterministic if and only if whenever $a,b \in A$ and $\mathbf{d}(a) = \mathbf{d}(b)$
then $\mathbf{r}(a) = \mathbf{r}(b)$.
\item A non-empty subset $A \subseteq C$ is codeterministic if and only if whenever $a,b \in A$ and $\mathbf{r}(a) = \mathbf{r}(b)$
then $\mathbf{d}(a) = \mathbf{d}(b)$.
\end{enumerate}
\end{lemma}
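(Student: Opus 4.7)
The plan is to translate the equational definitions of deterministic and codeterministic into explicit set equalities in $\mathsf{P}(C)$, and then extract the pointwise conditions by testing with singleton projections.

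First I would unwind the multiplication of subsets when one factor is a set of identities. For $E \subseteq C_{o}$, an identity $e \in E$ composes with $a \in A$ on the left in the category precisely when $e = \mathbf{r}(a)$, in which case $ea = a$. Hence
$$EA = \{a \in A : \mathbf{r}(a) \in E\} \quad\text{and dually}\quad AE = \{a \in A : \mathbf{d}(a) \in E\}.$$
Combined with $A^{\ast} = \{\mathbf{d}(a) : a \in A\}$, this gives
$$A(EA)^{\ast} = \{a \in A : \exists\, b \in A \text{ with } \mathbf{r}(b) \in E \text{ and } \mathbf{d}(a) = \mathbf{d}(b)\}.$$

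For (1), the inclusion $EA \subseteq A(EA)^{\ast}$ is automatic by taking $b := a$, so the deterministic identity $EA = A(EA)^{\ast}$ for all $E$ reduces to the reverse inclusion: whenever $a,b \in A$ satisfy $\mathbf{d}(a) = \mathbf{d}(b)$ and $\mathbf{r}(b) \in E$, one must have $\mathbf{r}(a) \in E$. The key step, and essentially the only non-formal move, is to specialize to the singleton $E = \{\mathbf{r}(b)\} \in U$, which forces $\mathbf{r}(a) = \mathbf{r}(b)$ and yields the claimed pointwise condition. Conversely, if $\mathbf{d}(a) = \mathbf{d}(b)$ always implies $\mathbf{r}(a) = \mathbf{r}(b)$ in $A$, then the set equation holds for every $E$.

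Part (2) follows by the evident left/right symmetry in the Ehresmann axioms and in the description of the subset multiplication: exchange $\mathbf{d}$ with $\mathbf{r}$ and $\ast$ with $+$, probe instead with the singleton $E = \{\mathbf{d}(b)\}$, and repeat the same argument. There is no genuine obstacle; the only thing to verify is that $U = \mathsf{P}(C_{o})$ is rich enough to contain the singleton projections used to separate elements, and this is immediate since every point of $C_{o}$ is a legitimate element of $\mathsf{P}(C_{o})$.
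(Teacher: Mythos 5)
Your proof is correct and follows essentially the same route as the paper: unwind the subset products (so that the deterministic identity $EA = A(EA)^{\ast}$ becomes a pointwise statement about $\mathbf{d}$ and $\mathbf{r}$) and then probe with singleton projections $\{\mathbf{r}(b)\}$ to force $\mathbf{r}(a)=\mathbf{r}(b)$, with the converse being the same element chase the paper carries out. The only difference is presentational --- you package the computation as explicit formulas for $EA$, $AE$ and $A(EA)^{\ast}$, whereas the paper argues the forward direction by contrapositive --- so there is nothing substantive to change.
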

\begin{proof}
We prove (1) since (2) follows by symmetry.
Let $A$ be a non-empty subset of $C$.
Suppose that $a,b \in A$ are such that $\mathbf{d}(a) = \mathbf{d}(b) = e$
but $\mathbf{r}(a) = i$ is different from $\mathbf{r}(b) = j$.
We prove that $A$ is non-deterministic.
The singleton set $\{i\}$ is an element  of $U$.
The product  $\{i\}A$ is an element of $\mathsf{P}(S)$ 
 which contains $a$ but which does not contain $b$.
 On the other hand, $(\{i\} A)^{\ast}$ contains $e$.
It follows that $A(\{i\}A)^{\ast}$ contains both $a$ and $b$.
Thus $A$ cannot be deterministic since $\{i\}A \neq A(\{i\}A)^{\ast}$.
Suppose now that $A$ is such that whenever $a,b \in A$ and $\mathbf{d}(a) = \mathbf{d}(b)$ then $\mathbf{r}(a) = \mathbf{r}(b)$.
We prove that $A$ is deterministic.
Let $E$ be any projection.
Let $x \in A(EA)^{\ast}$.
Then $x = ae$ where $a \in A$ and $e \in (EA)^{\ast}$.
Thus $e = \mathbf{d}(e'a')$ where $e' \in E$ and $a' \in A$.
It follows that $x,a' \in A$ and $\mathbf{d}(x) = \mathbf{d}(a')$.
By assumption, $\mathbf{r}(x) = \mathbf{r}(a')$.
But then $x = e'x$ where $e' \in E$.
\end{proof}

Ehresmann monoids of the form $\mathsf{P}(C)$ come equipped with subset inclusion as an order.
We shall formalize its properties below.
Let $(S,U)$ be an Ehresmann monoid with set of projections $U$
where $1 \in U$.
We shall say that $S$ is {\em Boolean} if $S$ is equipped with a partial order, denoted by
 $\subseteq$, 
with respect to which $(S,\subseteq)$ is a Boolean algebra
--- we denote the top element of $S$ by $t$ ---
such that the following properties hold:
\begin{description}
\item[{\rm (OE1)}] $c (a \cup b) = ca \cup cb$, and dually.
\item[{\rm (OE2)}] If $e \in U$ and $a \subseteq e$ then $a \in U$.
\item[{\rm (OE3)}] If $e,f \in U$ then $e \leq f$ if and only if $e \subseteq f$.
 \item[{\rm (OE4)}] If $a = eb$ then $a \subseteq b$ and if $a = be$ then $a \subseteq b$ where $e \in U$.
 \item[{\rm (OE5)}] $(a \cup b)^{\ast} = a^{\ast} \cup b^{\ast}$, and dually.
\end{description}
In a Boolean Ehresmann monoid, the set of projections is the order-ideal determined by the identity element of the monoid by axiom (OE2).

The proof of the following lemma is immediate from axioms (OE1) and (OE5).

\begin{lemma}\label{lem:order-properties-new} Let $(S,U)$ be Boolean Ehresmann monoid.
Then the following two properties hold:
\begin{enumerate}
\item If $a \subseteq b$ and $c \subseteq d$ then $ac \subseteq bd$.
\item If $a \subseteq b$ then $a^{\ast} \subseteq b^{\ast}$ and $a^{+} \subseteq b^{+}$.
\end{enumerate}
\end{lemma}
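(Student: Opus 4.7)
The plan is to exploit the standard Boolean-algebra fact that $a \subseteq b$ is equivalent to $a \cup b = b$, and then apply axioms (OE1) and (OE5) directly to get each conclusion in one line.

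First I would dispose of part (2). Assume $a \subseteq b$, so $a \cup b = b$ in the Boolean algebra $(S,\subseteq)$. Apply $\ast$ to both sides and use axiom (OE5):
\[
b^{\ast} = (a \cup b)^{\ast} = a^{\ast} \cup b^{\ast},
\]
which is precisely the statement $a^{\ast} \subseteq b^{\ast}$. The inequality $a^{+} \subseteq b^{+}$ follows dually from the same axiom.

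For part (1), I would factor the claim as $ac \subseteq bc \subseteq bd$ and prove each inclusion separately using axiom (OE1). Given $a \subseteq b$, rewrite this as $a \cup b = b$ and multiply on the right by $c$, using the right-hand version of (OE1):
\[
bc = (a \cup b)c = ac \cup bc,
\]
hence $ac \subseteq bc$. Likewise, from $c \subseteq d$, i.e.\ $c \cup d = d$, the left-hand instance of (OE1) yields $bd = b(c \cup d) = bc \cup bd$, so $bc \subseteq bd$. Transitivity of the partial order $\subseteq$ then gives $ac \subseteq bd$.

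There is no real obstacle here: the only content is the translation of the Boolean order into join-equations and a single application of each of (OE1) and (OE5). The one thing to be careful about is that (OE1) is stated as a single equation but implicitly encodes both a left-distributive and a right-distributive law (indicated by the ``and dually''), and similarly (OE5) covers both $\ast$ and $+$; one should flag this so the reader sees which half of each axiom is being used where.
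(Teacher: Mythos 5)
Your proof is correct and is exactly the argument the paper has in mind: the paper simply states the lemma is ``immediate from axioms (OE1) and (OE5)'', and your translation of $\subseteq$ into join-equations followed by one application of each axiom (plus transitivity for part (1)) is the standard way to make that immediacy explicit.
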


The following result assures us that the set of projections of a Boolean Ehresmann monoid is also a Boolean algebra.

\begin{lemma} Let $(S,U)$ be a Boolean Ehresmann monoid.
Then the following hold:
\begin{enumerate}
\item $t^{\ast} = 1 = t^{+}$.
\item $U$ is a Boolean algebra in its own right.
\end{enumerate}
\end{lemma}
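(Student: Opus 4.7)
The plan is to prove (1) first and then use it to transfer the Boolean structure of $S$ to $U$.

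For (1), I would exploit the remark made just before the lemma that $U$ coincides with the principal order ideal $\{a \in S : a \subseteq 1\}$. Since $t$ is the top of $(S,\subseteq)$, we have $1 \subseteq t$; applying Lemma~\ref{lem:order-properties-new}(2) together with $1^{\ast} = 1$ yields $1 \subseteq t^{\ast}$. Conversely, $t^{\ast} \in U$, so the order-ideal description of $U$ gives $t^{\ast} \subseteq 1$. Hence $t^{\ast} = 1$, and $t^{+} = 1$ follows dually.

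For (2), the goal is to realize $U$ as a sub-Boolean-algebra of $S$. The first step, which I expect to be the main obstacle, is to verify that the semilattice multiplication on $U$ coincides with the Boolean meet $\cap$ inherited from $S$. For $e,f \in U$, (OE4) gives $ef \subseteq e$ and $ef \subseteq f$, hence $ef \subseteq e \cap f$. Conversely, $e \cap f \subseteq 1$ places $e \cap f$ in $U$ by (OE2), and (OE3) lifts the inclusions $e \cap f \subseteq e,f$ to the semilattice order on $U$, so $e \cap f \leq ef$ in $U$; applying (OE3) once more returns $e \cap f \subseteq ef$. Closure of $U$ under binary joins is straightforward: if $e,f \subseteq 1$ then $e \cup f \subseteq 1$, so $e \cup f \in U$ by (OE2); and $0 \in U$ because $0 \subseteq 1$.

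Thus $U$ is a bounded sublattice of $S$ containing both $0$ and $1$, and distributivity is inherited. It remains to produce complements inside $U$. For $e \in U$, let $\bar{e}$ denote the Boolean complement of $e$ in $S$ and set $e' := \bar{e} \cap 1$, which lies in $U$ since $e' \subseteq 1$. Then $e \cap e' = (e \cap \bar{e}) \cap 1 = 0$, while distributivity together with $e \subseteq 1$ gives $e \cup e' = (e \cup \bar{e}) \cap (e \cup 1) = t \cap 1 = 1$, where the last equality uses part (1) (via $1 \subseteq t$). So $e'$ is a complement of $e$ within $U$, and $U$ is a Boolean algebra in its own right.
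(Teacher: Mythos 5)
Your proof is correct and follows essentially the same route as the paper: part (1) via monotonicity of $\ast$ under $\subseteq$ together with the fact that the projections form the order ideal below $1$, and part (2) by showing $U$ is closed under $\cap$ and $\cup$ via (OE2) and taking $\neg e = \bar{e} \cap 1$ as the complement, checked by distributivity. Your extra verification that the semilattice product $ef$ coincides with the Boolean meet $e \cap f$ is a welcome detail the paper only asserts via (OE3), and the only quibble is that $t \cap 1 = 1$ needs nothing more than $1 \subseteq t$, not part (1).
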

\begin{proof} (1) The top element $t$ has the property that $a \subseteq t$ for all $a \in S$.
Thus $a^{\ast} \subseteq t^{\ast}$ by Lemma~\ref{lem:order-properties-new}.
It follows that $t^{\ast}$ is an upper bound for all projections.
Thus $1 \subseteq t^{\ast} \subseteq 1$.
We have proved that $t^{\ast} = 1$.
It follows by symmetry that $t^{+} = 1$.

(2) By axiom (OE3), if $e$ and $f$ are projections then $ef = e \wedge f$.
In a Boolean Ehresmann monoid, we are told that $(S,\subseteq)$ is a Boolean algebra. We prove that $U$ is a Boolean algebra.
Let $e,f \in U$.
By assumption, both $e \wedge f$ and $e \vee f$ exist in $S$.
Since $e \wedge f \subseteq e$ we know by axiom (OE2) that $e \wedge f \in U$.
The fact that $e \vee f \in U$ follows by axiom (OE5) and the definition of an Ehresmann
semigroup; alternatively, $e,f \subseteq 1$ and so $e \vee f \subseteq 1$.
It now follows that $e \vee f \in U$ by axiom (OE2).
Thus $U$ is closed under binary greatest lower bounds and least upper bounds.
Let $e \in U$.
We need to prove that $U$ has complements.
Let $e \in U$.
Then $\bar{e} \in S$.
Thus $\neg e = 1 \wedge \bar{e} \in U$.
We calculate $e \wedge \neg e = e \wedge 1 \wedge \bar{e} = 0$
and $e \vee \neg e = e \vee (1 \wedge \bar{e}) = 1 \wedge t = 1$.
Thus $\neg e$ is the complement in $U$ of $e$. 
\end{proof}
 
The motivation for the above definition comes, of course, from Example~\ref{ex:cats}
as the following lemma shows.
The proof is by routine verification.

\begin{lemma}\label{lem:ordered-e-cat}
Let $C$ be a (finite) category.
Then $\mathsf{P}(C)$ is a Boolean Ehresmann monoid.
\end{lemma}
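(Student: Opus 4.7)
The plan is to use Example~\ref{ex:cats}, which already establishes that $\mathsf{P}(C)$ is an Ehresmann monoid with set of projections $U = \mathsf{P}(C_{o})$, and to reduce the lemma to checking the five axioms (OE1)--(OE5) with respect to the canonical order given by set-theoretic inclusion $\subseteq$. Under this order $\mathsf{P}(C)$ is manifestly a Boolean algebra: the join is union, the meet is intersection, the bottom is $\emptyset$, the top is $t = C$, and complements are taken in $C$. So the real content is to verify that this Boolean structure interacts correctly with subset multiplication and with the projection maps $\ast$ and $+$.

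I would then work through the axioms one by one. For (OE1), unpacking the definition $X(A \cup B) = \{xy \colon x \in X,\ y \in A \cup B,\ \exists xy\}$ and splitting on whether $y \in A$ or $y \in B$ gives $X(A \cup B) = XA \cup XB$; the dual is identical. Axiom (OE2) is immediate, because a subset of $C_{o}$ is itself a subset of $C_{o}$. For (OE3), Example~\ref{ex:cats} already records that $EF = E \cap F$ for $E, F \in U$, so $E \leq F$ (that is, $EF = E$) coincides with $E \subseteq F$. Axiom (OE4) follows because any element of $EB$ with $E \subseteq C_{o}$ has the form $eb$ with $e$ an identity satisfying $\mathbf{d}(e) = \mathbf{r}(b)$, so $eb = b \in B$ and hence $EB \subseteq B$; dually, $BE \subseteq B$. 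Finally, (OE5) is a one-line computation from the pointwise definition $A^{\ast} = \{\mathbf{d}(a) \colon a \in A\}$, since the image of a union is the union of the images, and similarly for $+$.

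There is no genuine obstacle: every clause reduces to an elementary manipulation of subset products, composition in $C$, and the pointwise formulas for $\ast$ and $+$. In particular, the finiteness of $C$ is never used, which is consistent with the author bracketing `(finite)' in the statement; finiteness will be needed for the characterization theorem later in Section~3, but not for this preparatory lemma.
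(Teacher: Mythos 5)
Your verification is correct and matches the paper's intent exactly: the paper simply declares the proof to be "routine verification," and your axiom-by-axiom check of (OE1)--(OE5) against the powerset Boolean algebra and the pointwise formulas for $\ast$ and $+$ is precisely that routine verification, including the accurate observation that finiteness plays no role here.
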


The following notion first arose in \cite{Resende2007} but was then extended in \cite{KL2017}.
In Boolean Ehresmann monoids,
we shall need a stronger notion than that of a bideterministic element.
Let $S$ be a Boolean Ehresmann monoid.
An element $a \in S$ is said to be a {\em partial isometry} if whenever $b \subseteq a$ then $b \leq a$.
Recall that $b \leq a$ means that $b = ea = af$ for some $e,f \in U$.
The following was proved as \cite[Lemma 2.26]{KL2017}.

\begin{lemma}\label{lem:order-ideal} 
In a Boolean Ehresmann monoid, the set of partial isometries is an order-ideal.
\end{lemma}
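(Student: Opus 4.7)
The plan is to show directly that the property of being a partial isometry is inherited by $\subseteq$-subelements. So fix a partial isometry $a$ in the Boolean Ehresmann monoid $S$, let $b \subseteq a$, and take an arbitrary $c \subseteq b$; I must produce projections $e, f \in U$ with $c = eb = bf$, i.e.\ $c \leq b$.

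The first step is to exploit the partial-isometry hypothesis on $a$: since $c \subseteq b \subseteq a$, both $c \leq a$ and $b \leq a$. Using the characterization $x \leq y \iff x = x^{+}y = y x^{\ast}$ recorded just after the definition of $\leq$, this gives the formulas $c = c^{+}a = a c^{\ast}$ and $b = b^{+}a = a b^{\ast}$.

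The second step converts the Boolean inclusion $c \subseteq b$ into semilattice inequalities between the relevant projections. By Lemma~\ref{lem:order-properties-new}(2), $c \subseteq b$ implies $c^{+} \subseteq b^{+}$ and $c^{\ast} \subseteq b^{\ast}$; and by axiom (OE3), $\subseteq$ and $\leq$ agree on $U$, so $c^{+} \leq b^{+}$ and $c^{\ast} \leq b^{\ast}$. Since the meet in $U$ is just multiplication, this means $c^{+} b^{+} = c^{+}$ and $b^{\ast} c^{\ast} = c^{\ast}$.

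Now a one-line substitution closes the argument: $c^{+} b = c^{+} b^{+} a = c^{+} a = c$, and dually $b c^{\ast} = a b^{\ast} c^{\ast} = a c^{\ast} = c$. Thus $c = c^{+} b = b c^{\ast}$, which is precisely $c \leq b$. Since $c \subseteq b$ was arbitrary, $b$ is a partial isometry, and the set of partial isometries is closed under $\subseteq$, hence is an order-ideal.

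I do not anticipate a serious obstacle; the proof is essentially bookkeeping, and the only nontrivial ingredient is Lemma~\ref{lem:order-properties-new}(2) together with (OE3) to translate the Boolean order on $S$ into the semilattice order on $U$. If one wanted to be cautious, the step where $c^{+}b^{+} = c^{+}$ is used should be spelled out as the identity that the product of two comparable projections is the smaller one, which is immediate from $U$ being a meet semilattice under $\leq$.
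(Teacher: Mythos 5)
Your proof is correct and follows essentially the same route as the paper: pass from $c \subseteq b \subseteq a$ to $c, b \leq a$, compare the projections $c^{+}, c^{\ast}$ with $b^{+}, b^{\ast}$, and substitute to get $c = c^{+}b = bc^{\ast}$. The only cosmetic difference is that you cite Lemma~\ref{lem:order-properties-new}(2) together with (OE3) where the paper invokes axiom (OE5) directly, which amounts to the same thing.
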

\begin{proof} Let $a$ be a partial isometry and let $b \subseteq a$.
Since $a$ is a partial isometry, we have that $b \leq a$.
Thus $b = b^{+}a = ab^{\ast}$.
We prove that $b$ is a partial isometry.
Let $c \subseteq b$. Then $c \subseteq a$.
It follows that $c \leq a$.
Thus $c = c^{+}a = ac^{\ast}$.
By axiom (OE5) applied to $c \subseteq b$,
we deduce that $c^{+} \leq b^{+}$ and $c^{\ast} \leq b^{\ast}$.
Thus  
$c^{+}b = c^{+}(b^{+}a) = c^{+}a = c$
and
$bc^{\ast} = (ab^{\ast})c^{\ast} = ac^{\ast} = c$.
It follows that $c \leq b$, as required.
\end{proof}

The following was proved in \cite[Lemma 2.26]{KL2017}
but we give a direct proof here.

\begin{lemma}\label{lem:cake} In a Boolean Ehresmann monoid, 
every partial isometry is bideterministic.
\end{lemma}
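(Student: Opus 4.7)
The plan is to reduce bideterminism directly to the partial isometry hypothesis through axiom (OE4). Fix an arbitrary projection $e \in U$. I would first observe that $ea \subseteq a$, which is exactly the content of axiom (OE4) applied to the expression $ea = ea$. Symmetrically, $ae \subseteq a$.

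Next, since $a$ is a partial isometry, the hypothesis $ea \subseteq a$ immediately upgrades to $ea \leq a$, and likewise $ae \leq a$. Now I would unpack the definition of $\leq$ recalled just before Lemma~\ref{lem:order-ideal}: $b \leq a$ means $b = b^{+}a = ab^{\ast}$. Applying this to $b = ea$ yields $ea = a(ea)^{\ast}$, which is precisely the deterministic condition for $a$ at $e$. Applying the same unpacking to $b = ae$ yields $ae = (ae)^{+}a$, which is the codeterministic condition. Since $e$ was arbitrary, $a$ is bideterministic.

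There is essentially no obstacle here: the whole proof is a one-line chain of implications once (OE4) is invoked to get $ea \subseteq a$ and $ae \subseteq a$. The only thing to be careful about is that the notion of partial isometry uses the order $\leq$ (the intersection of $\leq_{l}$ and $\leq_{r}$), not $\leq_{l}$ or $\leq_{r}$ alone, so that both equalities $ea = a(ea)^{\ast}$ and $ea = (ea)^{+}a$ are available simultaneously from a single application of the partial isometry property. This is what makes the argument work symmetrically for both the deterministic and codeterministic clauses.
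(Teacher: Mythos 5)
Your proof is correct and follows essentially the same route as the paper: invoke axiom (OE4) to get $ea \subseteq a$ and $ae \subseteq a$, upgrade to $\leq$ via the partial isometry hypothesis, and read off $ea = a(ea)^{\ast}$ and $ae = (ae)^{+}a$. The only difference is cosmetic: the paper treats one of the two conditions and appeals to symmetry for the other, whereas you write out both halves explicitly (and in fact state the relevant equality more precisely than the paper's own wording does).
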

\begin{proof} Let $a$ be a partial isometry.
We prove that it is deterministic; the proof that it is codeterministic follows by symmetry.
Let $e$ be any projection.
Then $ae \subseteq a$ by axiom (OE4).
But $a$ is a partial isometry and so $ae \leq a$.
By definition, $ae = a(ae)^{\ast}$.
This proves that $a$ is deterministic.
\end{proof}

We next describe the partial isometries in the ordered Ehresmann monoids $\mathsf{P}(C)$.

\begin{lemma}\label{lem:det-codet-new} Let $C$ be a category.
Then a non-empty subset $A \subseteq C$ is a partial isometry in $\mathsf{P}(C)$ if and only if the following two conditions hold:
\begin{enumerate}
\item If $a,b \in A$ and $\mathbf{d}(a) = \mathbf{d}(b)$ then $a = b$.
\item If $a,b \in A$ and $\mathbf{r}(a) = \mathbf{r}(b)$ then $a = b$.
\end{enumerate}
\end{lemma}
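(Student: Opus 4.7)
The plan is to test the partial isometry condition on singleton subsets for the forward direction, and to reduce the relation $B \leq A$ to an explicit set-theoretic identity for the backward direction. The essential computational observation I will rely on throughout is that, for any set of identities $E \subseteq C_{o}$, the product in $\mathsf{P}(C)$ satisfies
$$AE = \{a \in A \colon \mathbf{d}(a) \in E\} \quad \text{and} \quad EA = \{a \in A \colon \mathbf{r}(a) \in E\},$$
because $ae$ is defined in $C$ exactly when $\mathbf{d}(a) = e$, in which case $ae = a$.

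For the forward direction, I suppose $A$ is a partial isometry and take $a,b \in A$ with $\mathbf{d}(a) = \mathbf{d}(b)$. I apply the partial isometry condition to the singleton $B = \{a\} \subseteq A$. Since $B \leq A$, we have $B = AB^{\ast} = A\{\mathbf{d}(a)\}$, which by the observation above is the set of all elements of $A$ whose domain equals $\mathbf{d}(a)$. This set contains $b$, forcing $b = a$ and establishing (1). The symmetric argument using $B = B^{+}A = \{\mathbf{r}(a)\}A$ gives (2).

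For the backward direction, I assume (1) and (2) hold and take an arbitrary $B \subseteq A$; the goal is $B \leq A$, i.e.\ $B = B^{+}A = AB^{\ast}$. Unfolding the product,
$$AB^{\ast} = \{a \in A \colon \mathbf{d}(a) = \mathbf{d}(b) \text{ for some } b \in B\}.$$
Given such an $a$, since $b \in B \subseteq A$ and $\mathbf{d}(a) = \mathbf{d}(b)$, condition (1) forces $a = b \in B$. The reverse inclusion $B \subseteq AB^{\ast}$ is immediate since $b = b\mathbf{d}(b)$ with $\mathbf{d}(b) \in B^{\ast}$. Hence $B = AB^{\ast}$, and the dual argument using (2) gives $B = B^{+}A$, so $B \leq A$.

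I do not expect any significant obstacle; the proof amounts to unfolding the definition of partial isometry in the concrete model $\mathsf{P}(C)$, with the only subtlety being the correct interpretation of the ``witness set'' products $AE$ and $EA$.
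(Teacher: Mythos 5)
Your proof is correct and follows essentially the same route as the paper: testing the partial isometry condition on the singleton $\{a\}$ (the paper phrases this contrapositively) for the forward direction, and verifying $B = AB^{\ast} = B^{+}A$ by unfolding the subset product for the converse. The explicit identities $AE = \{a \in A \colon \mathbf{d}(a) \in E\}$ and $EA = \{a \in A \colon \mathbf{r}(a) \in E\}$ are just a cleaner packaging of the element-chase the paper performs.
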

\begin{proof} 
Let $A$ be a subset that satisfies condition (1) and suppose that $B \subseteq A$.
Clearly, $B \subseteq AB^{\ast}$.
We shall show that $B = AB^{\ast}$.
Let $x \in AB^{\ast}$.
Then $x = a\mathbf{d}(b)$ where $a \in A$ and $b \in B$.
Thus $\mathbf{d}(x) = \mathbf{d}(b)$.
Now, $x,b \in A$.
By the assumption that $A$ satisfies the condition (1),
we must have that $x = b$.
We have therefore proved that $B = AB^{\ast}$.
The fact that $B = B^{+}A$ if $A$ satisfies condition (2) follows by symmetry.
We now prove the converse.
Suppose that $A$ is a partial isometry.
We prove that condition (1) holds;
the fact that condition (2) holds follows by symmetry.
Suppose that $a,b \in A$ be distinct elements such that $\mathbf{d}(a) = \mathbf{d}(b) = e$.
Clearly, $\{a\}, \{b\} \subseteq A$,
but we do not have that $\{a\} \leq A$.
The reason is that $\{a\}^{\ast} = \{e\}$ and $A\{a\}^{\ast} = A\{e\}$ contains both $a$ and $b$.
Thus this set cannot be equal to $\{a\}$.
\end{proof}

\begin{remark}\label{rem:trump}{\em Subsets of groupoids satisfying both the conditions of Lemma~\ref{lem:det-codet-new} 
are called {\em local bisections}.
The set of all local bisections of a groupoid forms an inverse semigroup.
See \cite[page 164]{Resende2007}.}
\end{remark}

\begin{lemma}\label{lem:atoms-are-pi} Let $S$ be a Boolean Ehresmann monoid.
Then any atom is a partial isometry.
\end{lemma}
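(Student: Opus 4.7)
The plan is to exploit the definition of an atom in the Boolean algebra $(S,\subseteq)$ together with the fact that the bottom element $0$ of this Boolean algebra must be a projection. Recall that an atom $a$ is a minimal nonzero element, so if $b \subseteq a$ then either $b = a$ or $b = 0$. If $b = a$, then $b \leq a$ is immediate since $a = a^{+}a = aa^{\ast}$. So the only substantive thing to verify is that $0 \leq a$ in the algebraic order.

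To see this, first observe that $0 \subseteq 1$, and $1 \in U$, so axiom (OE2) forces $0 \in U$. Hence $0$ is a projection, so in particular $0^{+} = 0 = 0^{\ast}$. Now I would apply Lemma~\ref{lem:basic-results}(2) to conclude $(0 \cdot a)^{+} \leq 0^{+} = 0$, hence $(0 \cdot a)^{+} = 0$, and then Lemma~\ref{lem:basic-results}(4) gives $0 \cdot a = 0$. By the dual argument, $a \cdot 0 = 0$ as well.

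Combining these yields $0 = 0^{+} a = a \cdot 0^{\ast}$, which is precisely the statement $0 \leq a$. Thus for any $b \subseteq a$, we have $b \leq a$, which means $a$ is a partial isometry.

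There is no real obstacle here; the only conceptual point is recognising that the Boolean-algebra zero is automatically a projection and acts as a multiplicative zero via Lemma~\ref{lem:basic-results}. Everything else is a two-line appeal to the atom property.
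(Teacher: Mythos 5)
Your proof is correct and takes essentially the same route as the paper's: both reduce to the atom dichotomy ($b \subseteq a$ forces $b = a$ or $b = 0$) and observe that $b \leq a$ in either case. The only difference is that you spell out the case $b = 0$, checking $0 \in U$ via (OE2) and invoking Lemma~\ref{lem:basic-results} to get $0 \cdot a = 0 = a \cdot 0$, a point the paper asserts without comment and uses in just the same way elsewhere (for instance in the proof of Theorem~\ref{them:TWO}).
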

\begin{proof} Let $a$ be an atom.
If $b \subseteq a$ then either $b = a$ or $b = 0$.
In both cases, $b \leq a$.
Thus $a$ is a partial isometry.
\end{proof}

The following example shows that the concepts we have introduced are distinct.

\begin{example}{\em Consider the following category $C$:
\begin{center}
\leavevmode
\xymatrix{e & f \ar@/^/[l]^{a}    \ar@/_/[l]_{b}}
\end{center}
The monoid $\mathsf{P}(C)$ has 16 elements.
The element $\{a\}$ is a partial isometry;
the elements $\{a,b\}$ is bideterministic but not a partial isometry
(since $\{a\} \subseteq \{a,b\}$ but  $\{a\} \nleq \{a,b\}$);
the element $\{a,e\}$ is not bideterministic.}
\end{example}

\begin{lemma}\label{lem:products-of-pi}
Let $C$ be a category.
The product of partial isometries in $\mathsf{P}(C)$ is also a partial isometry.
\end{lemma}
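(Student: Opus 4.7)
The plan is to invoke Lemma~\ref{lem:det-codet-new}, which reduces ``partial isometry in $\mathsf{P}(C)$'' to the two purely combinatorial conditions that the elements of the subset have pairwise distinct domains and pairwise distinct codomains. So the proof reduces to showing that if $A$ and $B$ both have these properties, then so does the set product
\[
AB = \{ab : a \in A,\ b \in B,\ \mathbf{d}(a) = \mathbf{r}(b)\}.
\]

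For condition (1), I would take two elements $ab, a'b' \in AB$ with $\mathbf{d}(ab) = \mathbf{d}(a'b')$. Since composition in $C$ satisfies $\mathbf{d}(xy) = \mathbf{d}(y)$, this says $\mathbf{d}(b) = \mathbf{d}(b')$; the partial-isometry hypothesis on $B$ then forces $b = b'$. In particular $\mathbf{r}(b) = \mathbf{r}(b')$, and by the definedness of the products this common codomain equals $\mathbf{d}(a) = \mathbf{d}(a')$. Applying condition (1) for $A$ yields $a = a'$, whence $ab = a'b'$. Condition (2) is entirely symmetric: equality of $\mathbf{r}(ab) = \mathbf{r}(a)$ and $\mathbf{r}(a'b') = \mathbf{r}(a')$ first pins down $a = a'$ via condition (2) for $A$, and then $b = b'$ via condition (2) for $B$.

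I do not expect any real obstacle here. The only mild subtlety is that an element of $AB$ is a product, not a single arrow, so one might worry that two different factorizations could cause problems; however, the matching condition $\mathbf{d}(a) = \mathbf{r}(b)$ together with the partial-isometry hypotheses ensures that once the domain (or codomain) of the product is fixed, both factors are determined uniquely, so the argument goes through cleanly without ever needing to reason about the Boolean order or the abstract notion of partial isometry directly.
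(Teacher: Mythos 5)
Your proposal is correct and follows essentially the same route as the paper: both reduce to the characterization of partial isometries in Lemma~\ref{lem:det-codet-new} and then check, for two factorizations $ab, a'b' \in AB$ with equal domains (respectively codomains), that the hypothesis on $B$ forces $b=b'$ and then the hypothesis on $A$ forces $a=a'$. Your write-up merely makes explicit the step the paper leaves to the reader (that $\mathbf{r}(b)=\mathbf{r}(b')$ gives $\mathbf{d}(a)=\mathbf{d}(a')$), so there is nothing to add.
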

\begin{proof} 
We use our characterization of partial isometries from Lemma~\ref{lem:det-codet-new}.
Let $A$ and $B$ be non-empty partial isometries.
We prove that $AB$ is a partial isometry.
Let $x,y \in AB$ and suppose that $\mathbf{d}(x) = \mathbf{d}(y)$.
Then $x = ab$ and $y = a'b'$ where $\exists ab$ and $\exists a'b'$ in the category $C$
and $a,a' \in A$ and $b,b' \in B$.
We have that $\mathbf{d}(b) = \mathbf{d}(b')$.
But $B$ is a partial isometry and so $b = b'$.
We may similarly show that $a = a'$ and so $x = y$.
The other case follows by symmetry.
\end{proof}

\begin{proposition}\label{prop:esemigroups-cats} Let $C$ be a category.
Then $\mathsf{P}(C)$ is a Boolean Ehresmann monoid with the order being subset-inclusion
in which the product of partial isometries is a partial isometry.
\end{proposition}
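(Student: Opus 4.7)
The plan is to assemble three results that have already been established in this section, since the proposition is essentially a consolidation statement gathering the key properties of $\mathsf{P}(C)$ in one place.

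First I would cite Example~\ref{ex:cats} to obtain that $\mathsf{P}(C)$ is an Ehresmann monoid with set of projections $\mathsf{P}(C_{o})$, where the operations $\ast$ and $+$ on a subset $A \subseteq C$ are given by the set of domains and the set of codomains of elements of $A$, respectively.

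Next I would invoke Lemma~\ref{lem:ordered-e-cat}, which tells us that this Ehresmann monoid is Boolean under subset inclusion. The axioms (OE1)--(OE5) in the Boolean Ehresmann monoid definition are all routine for $\mathsf{P}(C)$: distributivity (OE1) is the standard distributivity of subset multiplication over union; (OE2) holds because any subset of a set of identities is itself a set of identities; (OE3) is immediate from the fact that for $e,f \in C_{o}$ we have $ef = e \cap f$; (OE4) follows because multiplying a subset $A$ on the left or right by a set of identities simply cuts down $A$ to the elements whose codomain or domain lies in that set; and (OE5) reflects the fact that $\mathbf{d}$ and $\mathbf{r}$ commute with set-theoretic union.

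Finally I would apply Lemma~\ref{lem:products-of-pi} to conclude that the product of two partial isometries in $\mathsf{P}(C)$ is again a partial isometry. Combining these three inputs yields the proposition. There is no real obstacle here: the work has been done in Example~\ref{ex:cats}, Lemma~\ref{lem:ordered-e-cat}, and Lemma~\ref{lem:products-of-pi}, and this proposition merely packages the resulting structure so that it can be compared, in the next part of the section, with the abstract definition of a Boolean Ehresmann monoid whose partial isometries are closed under multiplication.
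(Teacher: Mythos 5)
Your proposal is correct and matches the paper's proof, which simply cites Lemma~\ref{lem:ordered-e-cat} and Lemma~\ref{lem:products-of-pi}; your extra remarks on Example~\ref{ex:cats} and the verification of (OE1)--(OE5) just unpack what those citations already cover.
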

\begin{proof} This follows by
Lemma~\ref{lem:ordered-e-cat} and Lemma~\ref{lem:products-of-pi}.
\end{proof}

We now prove the second theorem of this paper which characterizes the monoids arising in Proposition~\ref{prop:esemigroups-cats}.

\begin{theorem}\label{them:TWO} 
Let $(S,U)$ be a finite Boolean Ehresmann monoid
in which the product of partial isometries is a partial isometry.
Then there is a finite category $C$ such that $S$ is isomorphic to $\mathsf{P}(C)$ as an Ehresmann monoid
and in such a way that the order on $S$ is isomorphically mapped to the order on 
$\mathsf{P}(C)$.
\end{theorem}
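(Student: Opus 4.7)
The plan is to take the finite Boolean algebra $(S,\subseteq)$ and reconstruct the category $C$ whose morphisms are the atoms of $S$ and whose identities are the atoms of $U$; then to verify that the map $\varphi \colon S \to \mathsf{P}(C)$ sending $s$ to the set of atoms below it is an isomorphism of Ehresmann monoids.

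Step one is to show that for any atom $a$ of $S$, both $a^{\ast}$ and $a^{+}$ are atoms of $U$. Given $0 \neq e \in U$ with $e \leq a^{\ast}$, axiom (OE3) gives $e \subseteq a^{\ast} \subseteq 1$, and Lemma~\ref{lem:order-properties-new}(1) then yields $ae \subseteq a$; by (ES4), $(ae)^{\ast} = (a^{\ast}e)^{\ast} = e \neq 0$, so $ae \neq 0$ by Lemma~\ref{lem:basic-results}, hence $ae = a$ by atomicity of $a$, forcing $e = a^{\ast}$. The argument for $a^{+}$ is dual, and every atom is also a partial isometry by Lemma~\ref{lem:atoms-are-pi}.

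Step two is to define $C$: objects are the atoms of $U$, morphisms are the atoms of $S$, $\mathbf{d}(a) = a^{\ast}$, $\mathbf{r}(a) = a^{+}$, and the composition of $a$ with $b$ is the product $ab$ in $S$ precisely when $a^{\ast} = b^{+}$. The technical heart of the proof is the claim that, under this matching condition, $ab$ is itself an atom. Nonzeroness is immediate from (ES4): since $a^{\ast}b = b^{+}b = b$, we get $(ab)^{\ast} = b^{\ast} \neq 0$. For atomicity, the hypothesis of the theorem guarantees that $ab$ is a partial isometry; if $0 \neq c \subseteq ab$, then $c \leq ab$, so $c = c^{+}ab$, and since $(ab)^{+} = a^{+}$ is an atom of $U$ and $c^{+}$ a nonzero projection below it, $c^{+} = a^{+}$ and therefore $c = a^{+}ab = ab$. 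The identity and associativity laws in $C$ follow from (ES3) and the associativity of $S$.

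Step three verifies that $\varphi$ is an isomorphism of Ehresmann monoids. Every $s \in S$ is the $\subseteq$-join of the atoms it contains, so $\varphi$ is a Boolean-algebra bijection. Axiom (OE1) distributes $st$ termwise over atoms $a \in \varphi(s)$ and $b \in \varphi(t)$; each such $ab$ is either an atom (when $a^{\ast} = b^{+}$, by Step two) or zero (when $a^{\ast}$ and $b^{+}$ are distinct atoms of $U$, they are disjoint, and a short computation with (ES4) and Lemma~\ref{lem:basic-results} gives $ab = 0$), matching the definition of product in $\mathsf{P}(C)$ exactly. Axiom (OE5) together with Step one yields $\varphi(s^{\ast}) = \varphi(s)^{\ast}$ and dually, and $\varphi$ preserves $\subseteq$ by its very definition. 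The main obstacle, and the only place where the hypothesis that products of partial isometries are partial isometries is used essentially, is the atomicity claim in Step two.
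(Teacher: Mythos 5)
Your proposal is correct and follows essentially the same route as the paper's proof: the category is built from the atoms (with the partial-isometry hypothesis used exactly where the paper uses it, to show a restricted product of atoms is again an atom), and the map sending an element to the set of atoms below it is verified to be an isomorphism of Ehresmann monoids. The small deviations --- proving atomicity of $ab$ via the atom $(ab)^{+}=a^{+}$ instead of via $b$, folding the ``$ab=0$ when $a^{\ast}\neq b^{+}$'' case into the homomorphism check, and deducing preservation of $\ast$ directly from axiom (OE5) applied to the atom decomposition --- are only streamlinings of the paper's argument.
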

\begin{proof} Let $S$ be a Boolean Ehresmann monoid having the stated properties.
Our proof is in seven steps.
In the first four steps, we construct the category $C$.
To do this, we shall use the atoms of $S$.
Recall that in a finite Boolean algebra every element is a finite join 
of atoms and, by Lemma~\ref{lem:atoms-are-pi}, every atom is a partial isometry. \\

(1) {\em If $a$ is an atom then both $a^{+}$ and $a^{\ast}$ are atoms.}
Suppose that $a$ is an atom.
Let $e \subseteq a^{+}$, where $e$ is a projection by axiom (OE2).
Then $e \leq a^{+}$ by axiom (OE3).
By definition, $ea \leq_{r} a$ and so $ea \subseteq a$ by axiom (OE4).
But $a$ is an atom 
and so either $ea = a$ or $ea = 0$.
In the first case, $a^{+} \leq e$ and so $a^{+} \subseteq e$ by axiom (OE4).
We therefore deduce that $e = a^{+}$.
In the second case, $ea^{+} = 0$ by the properties of $+$ 
and Lemma~\ref{lem:basic-results}.
Thus $e = 0$.
We have therefore proved that $a^{+}$ is also an atom.
By symmetry, if $a$ is an atom, then so too is $a^{\ast}$.\\

(2) {\em If $a$ and $b$ are atoms then $ab \neq 0$ if and only if $a^{\ast} = b^{+}$.}
Suppose first that $a^{\ast} = b^{+}$.
If $ab = 0$ then $(ab)^{\ast} = 0$ and so by (ES4), we have that $(a^{\ast}b)^{\ast} = 0$
and so $b^{\ast} = 0$ since $a^{\ast} = b^{+}$.
It follows by Lemma~\ref{lem:basic-results} that $b = 0$ which is a contradiction since $b$ is an atom.
Suppose now that $ab \neq 0$.
Then $ab = (ab^{+})(a^{\ast}b)$ which is a restricted product.
Now $ab^{+ } \leq a$ and so $ab^{+} \subseteq a$ by axiom (OE4).
But $a$ is an atom.
Thus $ab^{+} = 0$ or $ab^{+} = a$.
But $ab \neq 0$ and so $ab^{+} = a$.
Similarly, $a^{\ast}b = b$.
It follows that $a^{\ast} = b^{+}$ and so the product is a restricted one.\\

(3) {\em If $a$ and $b$ are atoms and $ab$ is a restricted product then $ab$ is an atom.}
In (Step 2), we showed that $ab \neq 0$.
Both $a$ and $b$ are atoms and so by Lemma~\ref{lem:atoms-are-pi}
each of $a$ and $b$ is a partial isometry.
By assumption, their product $ab$ is a partial isometry.
Let $c \subseteq ab$.
Then $c \leq ab$ since $ab$ is a partial isometry.
Thus, in particular, $c = ab c^{\ast}$.
Now, $bc^{\ast} \leq b$ and so $bc^{\ast} \subseteq b$ by axiom (OE4).
But $b$ is an atom.
It follows that  $bc^{\ast} = 0$ or $bc^{\ast} = b$.
Suppose first that $bc^{\ast} = 0$.
Then $b^{\ast}c^{\ast} = 0$ by axiom (ES4).
But from $c \subseteq ab$ we get that $c^{\ast} \subseteq (ab)^{\ast} \leq b^{\ast}$
where we have used Lemma~\ref{lem:order-properties-new} 
and Lemma~\ref{lem:basic-results}.
Thus $c^{\ast} \leq b^{\ast}$ by axiom (OE3).
It follows that $c^{\ast} = 0$ and so $c = 0$ by Lemma~\ref{lem:basic-results}.
Now suppose that $bc^{\ast} = b$.
Then $c = ab$.
We have therefore proved that $ab$ is an atom.\\

(4) Let $a$ be an atom.
Define $\mathbf{d}(a) = a^{\ast}$ and $\mathbf{r}(a) = a^{+}$.
By (Step 1) above these are both atoms.
Put $C$ equal to the set of all atoms of $S$.
By (Step 3), the set $C$ is a category under the restricted product. 
We have therefore proved that $C$ is a category whose set of identities is the set of atoms in $U$.\\

We may accordingly construct the Ehresmann monoid $\mathsf{P}(C)$ whose set of projections is $\mathsf{P}(C_{o})$.
We prove that $S$ is isomorphic to $\mathsf{P}(C)$ as Ehresmann monoids.\\

(5) Since we are in a Boolean algebra, 
every non-zero element is a join of the atoms below it.
If $a$ is an element of $S$ define $\phi (0) = \varnothing$ and if $a \neq 0$ define $\phi (a)$ to be the set of all atoms below it. 
The map $\phi$ determines a bijection from $S$ to $\mathsf{P}(C)$.
Observe that by axiom (OE2), the elements below a projection are all projections.
Thus each projection is a finite join of atoms which are themselves projections.
It follows that $\phi$ also determines a bijection from $U$ to $\mathsf{P}(C_{o})$.
These bijections are actually isomorphisms of Boolean algebras.\\

(6) {\em By (Step 3) above, we have that  $\phi (a) \phi (b) \subseteq \phi (ab)$.
We prove the reverse inclusion.}
Let $x$ be an atom such that $x \subseteq ab$.
We are working in a Boolean algebra, 
and so
each of $a$ and $b$ can be written as unions of atoms.
Let 
$a = \bigcup_{i=1}^{m} a_{i}$ and $b = \bigcup_{j=1}^{n} b_{j}$, where $a_{i}$ and $b_{j}$ are atoms.
Then $ab = \bigcup_{1 \leq i \leq m, 1 \leq j \leq n} a_{i}b_{j}$ where we omit all products 
of atoms which are zero.
Thus by (Step 3), 
we know that each $a_{i}b_{j}$ is an atom and a restricted product.
We now use the distributivity property of Boolean algebras to deduce that
$x = \bigcup_{i,j} (x \wedge a_{i}b_{j})$.
Now $x$ is an atom and so $x \wedge a_{i}b_{j} = 0$ or $x \subseteq a_{i}b_{j}$.
Suppose that $x \subseteq a_{i}b_{j}$ for some $i$ and some $j$.
Then, since $a_{i}b_{j}$ is an atom, we must have that $x = a_{i}b_{j}$.
We have therefore written $x$ as a product of atoms where $a_{i} \subseteq a$
and $b_{j} \subseteq b$.\\

We have proved that $\phi$ is an isomorphism of semigroups
between $S$ and $\mathsf{P}(C)$ and between $U$ and $\mathsf{P}(C_{o})$.\\

(7) {\em $\phi$ is an isomorphism of Ehresmann semigroups.}
Let $a \in S$ be a non-zero element.
Then $a$ is the join of all the atoms below it.
Thus by axiom (OE5), we have that  $a^{\ast}$ is equal to the join of all the atoms of the form $e$ where $e \subseteq a^{\ast}$.
We now prove that every atom $e$ below $a^{\ast}$ is of the form $b^{\ast}$ 
where $b$ is an atom and $b \subseteq a$.
Let $e \leq a^{\ast}$ where $e$ is an atom.
Then $ae \leq_{l} a$ and so $ae \subseteq a$ by axiom (OE4).
Observe that $(ae)^{\ast} = e$.
If $ae$ is an atom then we are done.
If $ae$ is not an atom then $x \leq ae$ where $x$ is an atom since we are working in a Boolean algebra.
But $x^{\ast} \subseteq (ae)^{\ast} = e$.
But $e$ is an atom.
Thus either $x^{\ast} = 0$ which implies that $x = 0$,
which is ruled out since $x$ is an atom,
or $x^{\ast} = e$.
We have therefore found an atom $x \leq a$ such that $x^{\ast} = e$.
Consequently, $\theta (a^{\ast}) = \theta (a)^{\ast}$.
A dual result holds for $+$.
\end{proof}

\section{Restriction semigroups}

Recall that a restriction semigroup is an Ehresmann semigroup in which each element is bideterministic.
In this section, we shall, in effect, combine results we proved in the previous two sections.
The following results are all well-known.
We include them for the sake of completeness.

\begin{lemma}\label{lem:needed} Let $(S,U)$ be a restriction semigroup.
\begin{enumerate}
\item The partial orders $\leq_{l}$ and $\leq_{r}$
are the same and so each is equal to $\leq$.
\item The semigroup $S$ is partially ordered with respect to $\leq$.
\item If $a \leq bc$ then there exist $b' \leq b$ and $c' \leq c$ such that $b'c'$ is a restricted product and $a = b'c'$.
\item The set $U$ is an order-ideal of $S$.
\item If $a,b \leq c$ and $a^{\ast} = b^{\ast}$ (respectively, $a^{+} = b^{+})$ then $a = b$.
\item If $a \leq b$ then $a^{\ast} \leq b^{\ast}$ and $a^{+} \leq b^{+}$.
\end{enumerate}
\end{lemma}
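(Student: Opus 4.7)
The plan is to do (1) first, since it collapses the three orders into one and trivializes most of what follows. If $x \leq_{r} y$, then $x = ey$ for some $e \in U$, and by codeterminism of $y$ applied to $e$ (wait, we need to use determinism of $y$, i.e.\ $ey = y(ey)^{\ast}$), so $x = y(ey)^{\ast}$ with $(ey)^{\ast} \in U$; hence $x \leq_{l} y$. The reverse implication uses codeterminism symmetrically. Item (6) is then just the bullet points recorded right after the definitions of $\leq_{r}$ and $\leq_{l}$, so nothing further is needed. For (4), if $a \leq e$ with $e \in U$, then $a = e a^{\ast}$ by the characterization of $\leq_{l}$, and since $U$ is a subsemigroup (ES1), $a \in U$.

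Next I would handle (2) and (5) together. Reflexivity of $\leq$ is (ES3), and transitivity is immediate from the characterization $x \leq y$ iff $x = yx^{\ast}$ (if $a = bb^{\ast}\cdot$ correction: if $a \leq b \leq c$, then $a = ba^{\ast} = (cb^{\ast})a^{\ast} = c(b^{\ast}a^{\ast})$; using $a^{\ast} \leq b^{\ast}$ from (6) gives $b^{\ast}a^{\ast} = a^{\ast}$, so $a = ca^{\ast}$). Antisymmetry: if $a \leq b$ and $b \leq a$, then (6) gives $a^{\ast} = b^{\ast}$, whence $a = ba^{\ast} = bb^{\ast} = b$. The same idea proves (5): from $a,b \leq c$ and $a^{\ast} = b^{\ast}$ we get $a = ca^{\ast} = cb^{\ast} = b$; the $+$-version is dual.

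The main work is (3). Here my approach is to use Proposition~\ref{prop:cats} to refactor as a restricted product. Starting from $a \leq bc$, the fact $\leq = \leq_{l}$ gives $a = (bc)a^{\ast} = b(ca^{\ast})$. Set $c_{0} = ca^{\ast}$; then $c_{0} \leq c$ by the $\leq_{l}$ description. Now apply the identity from Proposition~\ref{prop:cats}: for any $x,y$, $xy = (xe) \cdot (ey)$ where $e = x^{\ast}y^{+}$, and the right-hand product is a restricted one. Apply this with $x = b$ and $y = c_{0}$, letting $f = b^{\ast}c_{0}^{+}$, and define $b' = bf$ and $c' = fc_{0}$. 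Then $b'\cdot c'$ is a restricted product equal to $a$, $b' \leq b$ (as $b' = bf$ gives $b' \leq_{l} b$), and $c' \leq c_{0} \leq c$ (as $c' = fc_{0}$ gives $c' \leq_{r} c_{0}$, and (1) converts this to $\leq$, with transitivity from (2) finishing the job).

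The only subtle point—and the one I would present carefully—is the appeal to (1) and (2) to convert the one-sided inequalities arising naturally from $bf$ and $fc_{0}$ into inequalities with respect to the two-sided order $\leq$ used in the statement of (3). Everything else is bookkeeping on the definitions.
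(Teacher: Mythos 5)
Your parts (1), (4), (5) and (6) coincide with the paper's argument, and your part (3) is essentially the paper's proof in mirror image: the paper writes $a=(a^{+}b)c$ and refactors with $e=(a^{+}b)^{\ast}c^{+}$, you write $a=b(ca^{\ast})$ and refactor with $f=b^{\ast}c_{0}^{+}$; both rest on the restricted-product identity of Proposition~\ref{prop:cats} and on converting the one-sided orders to $\leq$ via (1), so there is nothing new or problematic there.

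The genuine gap is part (2). The assertion that ``the semigroup $S$ is partially ordered with respect to $\leq$'' is a statement about the interaction of $\leq$ with the multiplication: if $a\leq b$ and $c\leq d$ then $ac\leq bd$. That is what the paper proves, and it is precisely where part (1) is needed: $\leq_{l}$ is compatible with left multiplication (from $x=yf$ one gets $zx=zyf$) and $\leq_{r}$ with right multiplication, so once the two orders coincide the common order $\leq$ is compatible on both sides. What you verify instead --- reflexivity, transitivity and antisymmetry of $\leq$ --- holds in every Ehresmann semigroup and uses the restriction hypothesis nowhere, so it cannot be the intended content of (2); and the compatibility statement is the one that is actually used later, for instance to get the inclusion $\alpha(a)\alpha(b)\subseteq\alpha(ab)$ for the map $\alpha$ of Theorem~\ref{them:THREE}. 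The repair is the one-line argument just indicated, so the gap is small but real and should be filled explicitly.
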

\begin{proof} (1) Suppose that $a \leq_{r} b$. We prove that $a \leq_{l} b$;
the proof of the other direction follows by symmetry.
We are given that $a = eb$ where $e$ is a projection.
But $eb = b(eb)^{\ast}$ since $b$ is deterministic.
It follows that $a = b(eb)^{\ast}$ and so $a \leq_{l} b$.

(2) This follows by (1) above because $\leq_{l}$ is left compatible with the multiplication
and $\leq_{r}$ is right compatible.

(3) Suppose that $a \leq bc$.
Then $a = (a^{+}b)c$.
Put $e = (a^{+}b)^{\ast}c^{+}$.
Then $a = (a^{+}be)(ec)$.
Put $b' = a^{+}be$ and $c' = ec$.
Then $a = b'c'$, the product $b'c'$ is a restricted product,
and $b' \leq b$ and $c' \leq c$.

(4) This is immediate.

(5) The proof of this is straightforward.

(6) This follows by the properties of the partial orders.
\end{proof}

Let $C$ be a category.
Denote the set of partial isometries in $\mathsf{P}(C)$ by $\mathscr{PI}(C)$.
This is a restriction monoid by Lemma~\ref{lem:ordered-e-cat}, Lemma~\ref{lem:products-of-pi}, and Lemma~\ref{lem:cake}.

\begin{theorem}\label{them:THREE} Let $(S,U)$ be a restriction semigroup.
Then there is a category $C$ and an injective morphism $\alpha \colon S \rightarrow \mathscr{PI}(C)$.
\end{theorem}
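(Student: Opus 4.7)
The plan is to embed $S$ via its underlying category. By Proposition~\ref{prop:cats}, $C := S$ equipped with the restricted product is a category with $C_{o} = U$, $\mathbf{d}(x) = x^{\ast}$ and $\mathbf{r}(x) = x^{+}$. Define $\alpha \colon S \to \mathsf{P}(C)$ by sending $a$ to its principal downset under the partial order $\leq$:
\[
\alpha(a) = \{x \in S : x \leq a\}.
\]
Since $a \in \alpha(a)$, the map $\alpha$ is injective. If $x,y \in \alpha(a)$ satisfy $x^{\ast} = y^{\ast}$ then Lemma~\ref{lem:needed}(5) forces $x = y$, and the dual holds when $x^{+} = y^{+}$; so by Lemma~\ref{lem:det-codet-new}, $\alpha(a)$ lies in $\mathscr{PI}(C)$.

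The crux is verifying $\alpha(ab) = \alpha(a)\alpha(b)$, where the right-hand product is computed in $\mathsf{P}(C)$ and therefore consists of all restricted products $xy$ with $x \leq a$, $y \leq b$ and $x^{\ast} = y^{+}$. The inclusion $\alpha(a)\alpha(b) \subseteq \alpha(ab)$ follows from the compatibility of $\leq$ with multiplication, Lemma~\ref{lem:needed}(2). The reverse inclusion is precisely the Riesz-type decomposition Lemma~\ref{lem:needed}(3), which asserts that any $z \leq ab$ factors as a restricted product $z = a'b'$ with $a' \leq a$ and $b' \leq b$. This step is the main obstacle, and it is the only point at which the restriction (bideterministic) hypothesis is used in an essential way; without it, elements below $ab$ need not split into comparable factors, and $\alpha$ would fail to be a homomorphism.

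It remains to check that $\alpha$ preserves $\ast$ and $+$. By Example~\ref{ex:cats}, $\alpha(a)^{\ast} = \{x^{\ast} : x \leq a\}$; Lemma~\ref{lem:needed}(6) then shows $\alpha(a)^{\ast} \subseteq \alpha(a^{\ast})$. Conversely, every element of $\alpha(a^{\ast})$ is a projection by Lemma~\ref{lem:needed}(4), so given such a projection $e \leq a^{\ast}$, set $x := ae$; compatibility gives $x \leq a$, and axiom (ES4) combined with $e \leq a^{\ast}$ yields $x^{\ast} = (ae)^{\ast} = (a^{\ast}e)^{\ast} = a^{\ast}e = e$. Hence $e = x^{\ast} \in \alpha(a)^{\ast}$. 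The dual argument handles $+$, completing the embedding.
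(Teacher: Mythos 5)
Your proposal is correct and follows essentially the same route as the paper: the underlying category of $S$ via Proposition~\ref{prop:cats}, the principal-downset map $a \mapsto \{x : x \leq a\}$, with Lemma~\ref{lem:needed}(5) giving the partial-isometry property, (3) giving the homomorphism property, and (4),(6) plus (ES4) giving preservation of $\ast$ and $+$. Your computation $(ae)^{\ast} = a^{\ast}e = e$ in the last step is in fact a cleaner statement of what the paper intends there.
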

\begin{proof} For the category $C$, we take the set $S$ equipped with the restricted product
according to Proposition~\ref{prop:cats}.
Then $\mathscr{PI}(C)$ is a restriction monoid.
Define $\alpha \colon S \rightarrow \mathscr{PI}(C)$ 
by putting $\alpha (a)$ equal to the set of all elements less than or equal to $a$.
This is well-defined by part (5) of Lemma~\ref{lem:needed} and injective.
By part (4) of Lemma~\ref{lem:needed} elements of $U$ are mapped to elements of $\mathsf{P}(C_{o})$.
It is a homomorphism by part (3) of Lemma~\ref{lem:needed}.
It remains to check that it is a morphism.
By symmetry, it is enough to check the morphism property for $\ast$.
If $b \leq a$ then $b^{\ast} \leq a^{\ast}$ by part (6) of Lemma~\ref{lem:needed}.
On the other hand, if $e \leq a^{\ast}$ then $ae \leq a$ and $(ae)^{\ast} = a^{\ast}$.
\end{proof}

The following is related to questions discussed in \cite{GK}, although our approach is quite different. 
A natural question is the following:
given a restriction semigroup $S$, under what circumstances 
can $S$ be embedded into an inverse semigroup $T$ 
in such a way that $a^{\ast} = a^{-1}a$ and $a^{+} = aa^{-1}$.
We call such an embedding of a restriction semigroup in an inverse semigroup
a {\em nice embedding}. 
This problem is answered below modulo the problem of embedding categories into groupoids.  

\begin{theorem}\label{them:FOUR} Let $S$ be a restriction semigroup.
Denote the set $S$ equipped with the restricted product by $C$.
Then $S$ admits a nice embedding 
if and only if the 
category $C$ can be embedded into a groupoid $G$.
\end{theorem}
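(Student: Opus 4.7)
The plan is to prove both implications by shuttling between a small category and the restriction or inverse semigroup of its partial isometries, exploiting Proposition~\ref{prop:cats}, Theorem~\ref{them:THREE}, and Remark~\ref{rem:trump}. Niceness is precisely the bridge: it ensures that the restricted product induced by $\ast$ and $+$ agrees with the restricted product induced by $^{-1}$ on the ambient inverse semigroup.

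For the ``if'' direction I would assume a functorial embedding $\iota : C \hookrightarrow G$ into a groupoid $G$, and apply Theorem~\ref{them:THREE} to obtain the injective restriction morphism $\alpha : S \hookrightarrow \mathscr{PI}(C)$ given by $\alpha(a) = \{b \in S : b \leq a\}$. Because $\iota$ preserves $\mathbf{d}$, $\mathbf{r}$, and composition, the characterisation in Lemma~\ref{lem:det-codet-new} implies that $\iota$ sends partial isometries of $C$ to partial isometries of $G$; this induces an injective Ehresmann morphism $\iota_{\ast} : \mathscr{PI}(C) \hookrightarrow \mathscr{PI}(G)$, and by Remark~\ref{rem:trump} the codomain is an inverse semigroup in which inversion is pointwise. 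The key computation is the identity $A^{-1}A = A^{\ast}$ for $A \in \mathscr{PI}(G)$: for $a,b \in A$ the product $a^{-1}b$ is defined only when $\mathbf{r}(a) = \mathbf{r}(b)$, which by the partial-isometry condition forces $a = b$, so $A^{-1}A = \{a^{-1}a : a \in A\} = \{\mathbf{d}(a) : a \in A\} = A^{\ast}$; dually $AA^{-1} = A^{+}$. Thus $\iota_{\ast} \circ \alpha$ is the desired nice embedding.

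For the converse, suppose $j : S \hookrightarrow T$ is a nice embedding into an inverse semigroup $T$. Applying Proposition~\ref{prop:cats} to $T$ (with projections the idempotents of $T$ and operations $t^{\ast} = t^{-1}t$, $t^{+} = tt^{-1}$) equips the set $T$ with a category structure $G_{T}$ under the restricted product; since $t \cdot t^{-1} = tt^{-1}$ and $t^{-1} \cdot t = t^{-1}t$ for every $t \in T$, the category $G_{T}$ is in fact a groupoid. Niceness translates the defining condition $a^{\ast} = b^{+}$ of a restricted product in $S$ exactly into $j(a)^{-1}j(a) = j(b)j(b)^{-1}$ in $T$, so $j$ preserves and reflects definedness of restricted products; being a semigroup homomorphism, $j$ also preserves their values. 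Hence $j$ restricts to a faithful functor $C \hookrightarrow G_{T}$. The only non-bookkeeping step is the identity $A^{-1}A = A^{\ast}$ used above, where the partial-isometry hypothesis is indispensable for killing the off-diagonal terms $a^{-1}b$ with $a \neq b$; everything else flows directly from the cited constructions.
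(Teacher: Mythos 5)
Your proposal is correct and follows essentially the same route as the paper: the forward direction composes the embedding of Theorem~\ref{them:THREE} with the induced map $\mathscr{PI}(C)\hookrightarrow\mathscr{PI}(G)$ into the inverse semigroup of local bisections, and the converse takes the groupoid to be $T$ under its restricted product. Your explicit computation $A^{-1}A=A^{\ast}$ and $AA^{-1}=A^{+}$ for local bisections, and the check that a nice embedding preserves and reflects restricted products, merely spell out details the paper leaves implicit.
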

\begin{proof} Suppose first that the category $C$ can be embedded into a groupoid $G$.
By Theorem~\ref{them:THREE}, there is an injective morphism $\alpha \colon S \rightarrow  \mathscr{PI}(C)$.
Let $C \subseteq G$.
Then $\mathscr{PI}(C) \rightarrow \mathscr{PI}(G)$
is an injective morphism which is actually an embedding.
But the elements of $\mathscr{PI}(G)$ are simply the local bisections of $G$ and so form an inverse semigroup;
see Remark~\ref{rem:trump}.
It follows that $S$ admits a nice embedding into the inverse semigroup $\mathscr{PI}(G)$. 
To prove the converse, suppose that $S$ can be nicely embedded into the inverse semigroup $T$.
Let $G$ be the set $T$ equipped with the restricted product.
Then $C$ embeds into $G$ as a subcategory.
\end{proof}


\end{document}